\theoremstyle{plain}
\newtheorem{thm}{Theorem}[section]
\newtheorem{prop}[thm]{Proposition}
\newtheorem{claim}[thm]{Claim}
\newtheorem{cor}[thm]{Corollary}
\newtheorem{lem}[thm]{Lemma}
\newtheorem{quest}[thm]{Question}
\newtheorem*{thm*}{Theorem}
\newtheorem*{cor*}{Corollary}
\theoremstyle{definition}
\newtheorem{rem}[thm]{Remark}
\newtheorem{dfn}[thm]{Definition}
\title{On  generalisations of the Aharoni-Pouzet base exchange theorem}
\author{Zsuzsanna  Jankó}
\thanks{Jankó is grateful for the support of  NKFIH 
OTKA-K128611}
\address{Zsuzsanna  Jankó,  Department of Operations Research and Actuarial Sciences,  Corvinus University of Budapest,  
Budapest, Hungary;
Institute of Economics, Centre for Economic and Regional Studies, 
Budapest, Hungary}
\email{zsuzsanna.janko@uni-corvinus.hu, janko.zsuzsanna@krtk.hu}
\author{Attila Jo\'{o}}
\thanks{Jo\'{o} would like to thank the generous support of the Alexander 
von Humboldt Foundation and NKFIH 
OTKA-129211}
\address{Attila Jo\'{o},
Department of Mathematics, University of Hamburg,  Hamburg, Germany; Set theory, logic and topology research division, 
Alfr\'{e}d R\'{e}nyi Institute of Mathematics,  Budapest, Hungary}
\email{attila.joo@uni-hamburg.de, jooattila@renyi.hu}
\keywords{base exchange, infinite matroid}
\subjclass[2020]{Primary: 05B35, 15A03, 03E05,    Secondary: 05A18, 05C05, 05B40} 
\begin{document}

\begin{abstract}
The Greene-Magnanti theorem states that if $ M $ is a finite matroid, $ B_0 $ and $ B_1 $ are bases and  $ 
B_0=\bigcup_{i=1}^{n} X_i $ is a partition, then there is a partition $ B_1=\bigcup_{i=1}^{n}Y_i $  such that $ (B_0 
\setminus X_i) 
\cup Y_i $ is a base 
for every $ i $. The 
special case where each $ X_i $ 
is a singleton can be rephrased as the existence of a perfect matching in the base transition graph. Pouzet conjectured that this remains true in 
infinite dimensional vector spaces. Later he and Aharoni answered this conjecture affirmatively not just for vector spaces but for infinite matroids. 

We prove two generalisations of their result.  On the one hand, we show that `being a singleton' can be relaxed to `being 
finite'  and this is sharp in the sense the exclusion of infinite sets is really necessary.  
On the other hand,  we prove that if $ B_0$ and $ B_1 $ are bases, then  
there is a bijection $ F $ between their finite subsets such that $ (B_0\setminus I) \cup F(I) $ is a 
base for every  $ I$. In contrast to the approach of Aharoni and Pouzet, our proofs are completely elementary, they do not rely on infinite matching 
theory.
\end{abstract}
\maketitle

\section{Introduction}
In the usual axiomatization of finite matroids in the terms of bases, one of the axioms demands that if $ B_0 $ and $ B_1 $ 
are bases, then for every $ x\in B_0 $, there is a  $ y\in B_1 $ such that $ B_0-x+y $ is a base. An important research direction in matroid theory 
is looking for stronger base exchange properties.  Let us mention a few fundamental results in this subfield. First of all, the $ y $ above can be 
chosen in such a way that the exchange is ``symmetric'' in the sense that  $ B_1-y+x $ is also a base. Greene's theorem \cite{greene1973multiple} 
is a strengthening of this symmetric base exchange property stating that symmetric exchange of subsets of bases is also possible. 
Namely, for every $ X\subseteq B_0 $ there is a $ Y\subseteq B_1 $ such that $ (B_0\setminus X) \cup Y$ and $ (B_1\setminus Y)\cup X $ are 
both bases. The Greene-Magnanti theorem \cite{greene1975some} states that the following partition base exchange property also holds: If 
$B_0=\bigcup_{i=1}^{n} X_i 
$ is a partition, then  there is a partition $B_1=\bigcup_{i=1}^{n} Y_i $ such that $ (B_0\setminus X_i)\cup Y_i $ is a base of every $ i $. Note 
that Greene's theorem is equivalent with the   special case $ n=2 $ of this. A more recent result by Koltar, Roda 
and R. Ziv \cite[Theorem 1.1.]{koltar2021seq} ensures that in the setting of the Greene-Magnanti theorem one can choose the sets $ Y_i $ is such 
a way that not 
just $ (B_0\setminus 
X_i)\cup Y_i $ but also $ (B_0 \setminus \bigcup_{j=1}^{i}X_j)\cup \bigcup_{j=1}^{i}Y_i $ is a base for every $ i $.

Pouzet initiated the investigation of base exchange properties of infinite-dimensional 
vector spaces. In particular, he was interested in if the special case of the Greene-Magnanti theorem where all the sets $ X_i $ are singletons 
remains 
true in this more general setting.  This had been settled affirmatively by Aharoni and Pouzet \cite[Theorem 2.1]{aharoni1991bases} in a more 
general 
context provided by Definition 
\ref{def: finitary matroid}. 

Vector spaces are the 
motivating examples of matroids but several important vector spaces are infinite or even has infinite dimension. This led to the following matroid 
concept where the ground set and the bases are allowed to be infinite:

\begin{dfn}\label{def: finitary matroid}
A finitary matroid is a pair  $M=(E,\mathcal{I})$ with ${\mathcal{I} \subseteq \mathcal{P}(E)}$ such that  
\begin{enumerate}
[label=(\Roman*)]
\item\label{item: matroid axiom1} $ \emptyset\in  \mathcal{I} $;
\item\label{item: matroid axiom2} $ \mathcal{I} $ is downward closed;
\item\label{item: matroid axiom3} If $ I,J\in \mathcal{I} $ with $ \left|I\right|<\left|J\right| $, then there exists an $  e\in J\setminus I $ such that
$ I+e\in \mathcal{I} $;
\item\label{item: matroid axiom4} If all the finite subsets of  an infinite set $ X $ are in $ \mathcal{I} $, then $ X\in \mathcal{I} $.
\end{enumerate} 
\end{dfn}
\begin{rem}\
\begin{itemize}
\item If~$E$ is finite, then  \ref{item: matroid axiom1}-\ref{item: matroid axiom3}  is  the usual axiomatization of finite 
matroids in terms 
of independent sets while \ref{item: matroid axiom4} is redundant.
\item It is enough to demand axiom \ref{item: matroid axiom3} for finite $ I $ and $ J $.
\item Some authors call the concept defined in Definition \ref{def: finitary matroid} simply ``matroid'' (see for example 
\cite{polat1987compactness, komjath1988compactness, aharoni1998intersection}), 
other authors refer to it  as ``independence structure'' \cite{mcdiarmid1975exchange} or ``independence space'' \cite{jerzy2005matr} but 
the term ``finitary matroid'' 
became dominant in the literature.
\item The word ``finitary'' reflects to the fact that by axiom \ref{item: matroid axiom4} every circuit (i.e. minimal dependent set) is finite. This is 
in contrast to the more general concept of matroids discovered independently by Higgs \cite{higgs1969matroids} and Bruhn et al. 
\cite{bruhn2013axioms} where 
``infinitary matroids'' (i.e. 
matroids with infinite circuits) also exist.
\item Duals of finitary matroids are usually infinitary which was originally the main motivation of Rado to express the need  
\cite{rado1966abstract} for a more general infinite matroid concept than finitary matroids. 
\end{itemize}
\end{rem}
For more information about infinite matroids we recommend the chapter ``Infinite Matroids'' by Oxley in \cite{white1992matroid} and the 
habilitation thesis   \cite{nathanhabil} of Bowler with the same title.

On the one hand, we give an example that Greene's theorem may fail even for finite-cycle 
matroids of infinite graphs:
\begin{restatable}{thm}{GreeneFail}\label{thm: Greene fails}
There is a countably infinite graph $ G=(V, E) $ with edge-disjoint spanning trees $ T_0$ and $T_1 $ such that there is a bipartition $ 
E(T_0)=X_0\cup X_1 $ for which there are no edge-disjoint spanning trees $ S_0 $ and $ S_1 $ with $ E(T_0)\cap E(S_i)=X_i $ for $ i\in \{ 0,1 
\} $.
\end{restatable}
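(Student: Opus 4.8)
The plan is to construct an explicit counterexample. Let $G$ be the one-way infinite ladder with one extra diagonal added inside each square: its vertex set is $\{u_i, v_i : i \ge 0\}$ and its edges are $s_i := u_iu_{i+1}$ (``top rail''), $t_i := v_iv_{i+1}$ (``bottom rail''), $w_i := u_iv_i$ (``rungs'') and $d_i := u_iv_{i+1}$ (``diagonals''), $i \ge 0$. Then $T_0 := \{s_i : i\ge 0\} \cup \{w_i : i\ge 0\}$ (the top rail with every rung attached as a leaf) and $T_1 := \{t_i : i\ge 0\} \cup \{d_i : i\ge 0\}$ (the bottom rail with every diagonal attached as a leaf) are edge-disjoint spanning trees with $E(T_0)\cup E(T_1)=E(G)$; both are caterpillars, so this is immediate. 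I take the bipartition $X_0 := \{w_i : i\ge 0\}$, $X_1 := \{s_i : i\ge 0\}$ of $E(T_0)$.

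Because $G = T_0\cup T_1$, a pair of edge-disjoint spanning trees $S_0,S_1$ with $E(T_0)\cap E(S_i)=X_i$ amounts to the same as a pair of disjoint sets $Y_0,Y_1\subseteq E(T_1)$ for which $X_0\cup Y_0$ and $X_1\cup Y_1$ are spanning trees of $G$ (from $S_i$ take $Y_i:=E(S_i)\setminus E(T_0)$, and from $Y_0,Y_1$ put $S_i:=X_i\cup Y_i$; here $S_0\cap S_1=\emptyset$ iff $Y_0\cap Y_1=\emptyset$, since $X_0,X_1$ are disjoint subsets of $E(T_0)$ and $Y_0,Y_1\subseteq E(T_1)$). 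So it suffices to rule out such $Y_0,Y_1$, and I read off the two spanning-tree conditions by contraction. Since $X_0$ is a forest, $X_0\cup Y_0$ is a spanning tree of $G$ iff $Y_0$ is a spanning tree of $G/X_0$; contracting all rungs identifies $u_i$ with $v_i$ to a vertex $P_i$, and the only edges of $G/X_0$ lying in $E(T_1)$ are the two parallel edges $t_i,d_i$ joining $P_i$ and $P_{i+1}$, so $Y_0$ must contain exactly one of $\{t_i,d_i\}$ for every $i$. Contracting instead the connected top rail $X_1$ identifies all $u_i$ to a single vertex $U$, and $X_1\cup Y_1$ is a spanning tree iff $Y_1$ is a spanning tree of the ``fan'' on $\{U\}\cup\{v_i : i\ge 0\}$ whose edges available to $Y_1$ are the path $t_0,t_1,\dots$ through the $v_i$ and the spokes $d_i=Uv_{i+1}$; as $v_0$ has degree one in this fan, every such $Y_1$ contains $t_0$.

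Now assume disjoint $Y_0,Y_1$ as above exist. The heart of the proof is to show $t_i\in Y_1$ for all $i\ge 0$, by induction; the base case $t_0\in Y_1$ was just noted. For the step fix $i\ge 1$ with $t_0,\dots,t_{i-1}\in Y_1$: disjointness gives $t_j\notin Y_0$ for $j<i$, so $Y_0$ contains $d_j$ for $j<i$, whence $d_0,\dots,d_{i-1}\notin Y_1$. In the fan, the cut separating $A:=\{v_0,\dots,v_{i-1}\}$ from the remaining vertices is crossed only by $t_{i-1}$ and by $d_0,\dots,d_{i-2}$, and of these only $t_{i-1}$ belongs to $Y_1$; since $Y_1$ is a tree, the $v_i$--$U$ path in it crosses that cut an even number of times, hence never, so it avoids $A$. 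Its first edge out of $v_i$ is therefore $t_i$ or $d_{i-1}$ (the only edges at $v_i$ leading to $\{U,v_i,v_{i+1},\dots\}$), and since $d_{i-1}\notin Y_1$ we get $t_i\in Y_1$, completing the induction. But now $t_i\notin Y_0$ for all $i$, so $Y_0$ contains every $d_i$, forcing $Y_1\subseteq\{t_i : i\ge 0\}$; this set has no edge incident to $U$, so $X_1\cup Y_1$ does not join the $u_i$ to the $v_i$, contradicting that it is a spanning tree. Hence there is no such pair $Y_0,Y_1$, and $(G,T_0,T_1,X_0,X_1)$ is the desired counterexample.

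The step I expect to be the main obstacle is this inductive propagation: edge-disjointness is not a local constraint, and one must see, using connectivity of $Y_1$, that the forced behaviour at one square propagates to the next, so that the ``obligation'' of $Y_1$ to eventually bridge the two halves of $G$ keeps being deferred and is never met. The other point requiring care is arranging the two contractions so that the restriction $Y_i\subseteq E(T_1)$ turns into the clean local pictures used above; the remaining checks (that $T_0,T_1$ are edge-disjoint spanning trees, and the reformulation in the second paragraph) are routine.
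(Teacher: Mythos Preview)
Your proof is correct. The overall strategy matches the paper's: build an explicit graph, then show by an inductive forcing argument that the distribution of the $T_1$-edges between $S_0$ and $S_1$ is completely determined, after which the putative $S_1$ turns out to have two components. The concrete witness differs, however: the paper works on a single ray $v_0,v_1,\dots$ with two families of chords $e_n=v_{2n+1}v_{2n+3}$ and $h_n=v_{2n}v_{2n+3}$, takes $T_0$ to be the ray and splits it by parity, and runs the induction directly in $G$ via cuts $\delta(V_n)$ that have all but one crossing edge already assigned. Your ladder-with-diagonals example and the trick of contracting $X_0$ and $X_1$ separately are a pleasant variation: the contraction turns the spanning-tree condition on $S_i$ into one for $Y_i$ in a very small multigraph (a thick path for $Y_0$, a fan for $Y_1$), which makes the ``exactly one of $\{t_i,d_i\}$ in $Y_0$'' step immediate and lets the propagation for $Y_1$ be read off from a single cut. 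Neither route is materially shorter; yours is arguably easier to visualise.
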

On the 
other hand, we prove two  generalisations of the Aharoni-Pouzet base exchange theorem:
\begin{thm}\label{thm: GreeneMagnantiIntro}
Suppose that $ M=(E, \mathcal{I}) $ is a finitary matroid, $ B_0 $ and $ B_1 $ are 
bases of $ M $  and $B_0=\bigcup_{i<\kappa}X_i  $ is a 
partition where each $ X_i $ is finite. Then there is a 
partition $B_1=\bigcup_{i<\kappa}Y_i  $ such that $ (B_0\setminus X_i)\cup Y_i $ is a base for 
each $ i<\kappa $.
\end{thm}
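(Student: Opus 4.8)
The plan is to derive Theorem~\ref{thm: GreeneMagnantiIntro} from the singleton case --- i.e.\ the Aharoni--Pouzet theorem, which we first re-prove by an elementary matching argument --- by a transfinite bootstrapping that handles finitely many elements at a time. First I would reformulate what each $Y_i$ must do. Fix $i<\kappa$. Since $B_0\setminus X_i$ is independent, $N_i:=(M/(B_0\setminus X_i))|_{B_1}$ is again a finitary matroid of finite rank $|X_i|$, and for $Y\subseteq B_1$ one checks that $(B_0\setminus X_i)\cup Y$ is a base of $M$ iff $Y$ is a base of $N_i$ --- equivalently, as $\operatorname{rk}(N_i)=|X_i|<\omega$, iff $Y$ is an independent set of $N_i$ of size $|X_i|$. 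Thus the theorem asks for a partition $B_1=\bigsqcup_{i<\kappa}Y_i$ with each $Y_i$ a base of $N_i$; note $\sum_{i<\kappa}|X_i|=|B_0|=|B_1|$, and that each $x\in B_0\cap B_1$ lies in the ground set of only one $N_i$ (the one with $x\in X_i$) and so is forced into that $Y_i$ --- these forced assignments are dealt with up front.

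Second, the singleton case. When each $X_i=\{x_i\}$ the task is a perfect matching in the \emph{exchange graph} $G$ on $B_0\cup B_1$, where $x\sim y$ iff $B_0-x+y$ is a base, equivalently iff $y$ lies in the fundamental circuit $C(x,B_1)$. By axiom~\ref{item: matroid axiom4} every $C(x,B_1)$ is finite, so each vertex of $B_0$ has finite degree in $G$ (and a vertex of $B_0\cap B_1$ is adjacent only to itself), while the exchange/augmentation axioms give Hall's condition from both sides. A bipartite graph that is locally finite on one side and satisfies Hall's condition from both sides has a perfect matching, by a routine K\H{o}nig's-lemma compactness argument; this re-proves the Aharoni--Pouzet theorem.

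Third, the bootstrapping to finite parts. I would well-order $\kappa$ and construct the $Y_i$ by transfinite recursion, at stage $\alpha$ producing $Y_\alpha$ by finitely many one-element exchanges and maintaining the invariant that every $(B_0\setminus X_i)\cup Y_i$ with $i\le\alpha$ is a base and that the partial swap $B^{(\alpha)}:=(B_0\setminus\bigcup_{j<\alpha}X_j)\cup\bigcup_{j<\alpha}Y_j$ is a base of $M$. \textbf{The crux is the limit step: arbitrary valid exchanges need not ``converge'' --- already in the standard basis $\{e_n\}$ of a vector space, successively swapping $e_n$ for $e_n+e_{n+1}$ leaves a proper independent set in the limit --- so the exchanges yielding $Y_\alpha$ must be chosen, using the finite Greene--Magnanti theorem \cite{greene1975some} on a suitable finite minor of $M$, in tandem with a parallel enumeration of $B_1$, so that every $b\in B_1$ is committed to a class after finitely many stages and hence every relevant fundamental circuit has stabilised by any limit stage; then $B^{(\alpha)}$ is independent and spans $B_1$, so it is a base.} I expect essentially all the difficulty to lie in organising these local choices coherently --- a back-and-forth between the enumeration of the $X_i$ and that of $B_1$ --- which is exactly where the finitary axiom earns its keep.
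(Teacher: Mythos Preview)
There is a genuine error in your singleton step and a real gap in the bootstrapping. In Step~2 you write ``$x\sim y$ iff $B_0-x+y$ is a base, equivalently iff $y\in C(x,B_1)$''; but $B_0-x+y$ is a base iff $x\in C(y,B_0)$, whereas $y\in C(x,B_1)$ is equivalent to $B_1-y+x$ being a base. These are \emph{not} the same relation. Under the correct description the neighbours of $x\in B_0$ are $\{y\in B_1:\ x\in C(y,B_0)\}$, which can be infinite even in a finitary matroid; it is the $B_1$-side that has finite degree. Once one repairs this by passing to the \emph{symmetric} exchange graph (so both sides are locally finite), the existence of a perfect matching is precisely the Aharoni--Pouzet theorem and is not a ``routine K\H{o}nig compactness'' --- local finiteness together with finite Hall on both sides does not by itself yield a perfect matching without further work. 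So Step~2 either has a gap or is circular.

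In Step~3 the successor stage is underspecified. At stage~$\alpha$ you need a single $Y_\alpha\subseteq B_1\setminus\bigcup_{j<\alpha}Y_j$ that simultaneously makes $(B_0\setminus X_\alpha)\cup Y_\alpha$ a base \emph{and} keeps $B^{(\alpha+1)}=(B^{(\alpha)}\setminus X_\alpha)\cup Y_\alpha$ a base (and, for the back-and-forth, absorbs a prescribed $b\in B_1$). A single application of finite Greene--Magnanti on a finite minor gives a partition of one base relative to another; it does not produce a $Y_\alpha$ that is good for two different ambient bases $B_0$ and $B^{(\alpha)}$ at once, and you do not say which minor or how the two constraints are reconciled. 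The paper sidesteps exactly this by maintaining a \emph{different} invariant: it keeps $\bigl(\bigcup_{j<i}X_j\bigr)\cup\bigl(B_1\setminus\bigcup_{j<i}Y_j\bigr)$ a base, and obtains $Y_i$ via a \emph{symmetric} finite-set exchange (Proposition~\ref{prop: InfFinBipartition}, itself proved by an augmenting-path argument) in the contraction $M/\bigcup_{j<i}X_j$; the ``other half'' of the symmetric exchange is precisely what propagates the invariant, and finitariness is then used only once, at the end, to show $\bigcup_i Y_i=B_1$. For $\kappa>\omega$ the paper does not recurse through limit ordinals at all but instead chops the problem into countable pieces via an elementary-submodel style decomposition (Lemma~\ref{lem: elementarySubmodel}) and applies the $\kappa=\omega$ case to each piece. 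Your limit argument --- ``fundamental circuits stabilise, hence $B^{(\alpha)}$ spans $B_1$'' --- does not go through as stated: a $b\in B_1$ not yet committed may have $C(b,B_0)$ meeting some $X_j$ with $j<\alpha$, and you have not shown that $B^{(\alpha)}$ still spans such $b$.
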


\begin{restatable}{thm}{ExchangeAllFin}\label{thm: ExchangeAllFin}
Suppose that $ M=(E, \mathcal{I}) $ is a finitary matroid and $ B_0 $ and $ B_1 $ are 
bases of 
$ M $. Then 
there is a bijection $ F: 
[B_0]^{<\aleph_0}\rightarrow  [B_1]^{<\aleph_0}$ such that  $ (B_0\setminus I)\cup F(I) $ is a 
base for every $I\in [B_0]^{<\aleph_0} $.
\end{restatable}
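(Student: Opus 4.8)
The plan is to sidestep infinite matching theory by reducing everything to finite matroids, which is possible because the property in question is finitary. First I would contract $B_0\cap B_1$: the minor $M/(B_0\cap B_1)$ is again finitary, in it $B_0\setminus B_1$ and $B_1\setminus B_0$ are disjoint bases, and a bijection solving the problem there lifts to one for $M$ by letting every finite $I\subseteq B_0$ keep its trace on $B_0\cap B_1$ unchanged, so we may assume $B_0\cap B_1=\emptyset$. The decisive remark is then: for finite $I\subseteq B_0$ and finite $J\subseteq B_1$, the set $(B_0\setminus I)\cup J$ is a base of $M$ precisely when $J$ is a base of the finite-rank matroid $M/(B_0\setminus I)$ (which has rank $|I|$), and whether this happens is decided inside a finite minor of $M$ --- only $I$, $J$ and the finitely many elements of the fundamental circuits $C(f,B_0)$, $f\in J$, are involved.

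With this in hand I would settle the finite case. For a finite matroid it is enough, for each $n$, to find a perfect matching in the bipartite graph whose two (finite, equal-sized) classes are $[B_0]^{n}$ and $[B_1]^{n}$ and in which $I$ is joined to $J$ whenever $(B_0\setminus I)\cup J$ is a base; the union of these matchings over all $n$ is the desired bijection $F$. Greene's theorem \cite{greene1973multiple} furnishes, for every $I$, a partner $J$ with $(B_0\setminus I)\cup J$ (and even $(B_1\setminus J)\cup I$) a base, so every vertex on either side is non-isolated; the remaining step is to check Hall's condition for this symmetric-exchange graph, which --- being exactly the finite specialisation of our statement --- has to be extracted from the matroid exchange properties.

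In general I would build $F$ by a transfinite back-and-forth. Fix well-orders of $[B_0]^{<\aleph_0}$ and $[B_1]^{<\aleph_0}$ and construct $F$ as the union of an increasing chain of partial injections, each of whose pairs $(I,J)$ already satisfies that $(B_0\setminus I)\cup J$ is a base. At a successor stage one is handed a not-yet-listed finite set, say $I\subseteq B_0$, and must pair it with some still-unused $J\subseteq B_1$; by the finitary reduction above --- or, concretely, by applying Theorem~\ref{thm: GreeneMagnantiIntro} to the partition of $B_0$ consisting of $I$ together with all remaining singletons --- at least one admissible $J$ exists and is manufactured inside a finite minor. Limit stages are harmless.

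The step I expect to be the crux, and the point at which the argument must be genuinely matroidal rather than invoke a König/Aharoni-type matching theorem, is guaranteeing that this recursion never deadlocks: a Hall-type condition for the bipartite exchange relation on $[B_0]^{<\aleph_0}\sqcup[B_1]^{<\aleph_0}$, which need not be locally finite. I would look to the finiteness of the fundamental circuits $C(e,B_1)$ and $C(f,B_0)$ for the leverage here, using it to isolate the ``rigid'' part of $B_0\cup B_1$ --- where admissible partners are scarce and the finite case must be applied with its matching made compatible with everything else --- from the ``flexible'' part, where every finite set has $|B_0|$-many admissible partners and a greedy extension always works; the two strands are then interleaved. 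A tidy alternative packaging would be to exhibit $M$ as an inverse limit of its finite minors, choose the finite-case bijections compatibly along the system by a Rado-selection argument, and take $F$ to be the limiting bijection.
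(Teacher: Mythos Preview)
Your proposal correctly isolates the difficulty but does not overcome it. The transfinite back-and-forth is the heart of your plan, yet you yourself flag that the bipartite exchange relation on $[B_0]^{<\aleph_0}\sqcup[B_1]^{<\aleph_0}$ is not locally finite; worse, a fixed finite $I\subseteq B_0$ may have only \emph{finitely many} admissible partners $J$ (these are exactly the bases of the rank-$|I|$ matroid $(M/(B_0\setminus I))\upharpoonright B_1$, which can be few). Nothing in your argument prevents those finitely many $J$'s from being consumed at earlier stages of the recursion, so the ``forth'' step can genuinely deadlock. Your appeal to Theorem~\ref{thm: GreeneMagnantiIntro} only produces \emph{one} admissible $J$, not an unused one. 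The ``rigid versus flexible'' heuristic offers no mechanism to treat the two parts coherently: the rigid $I$'s do not form a substructure closed under any useful operation, and matching them first is itself an infinite Hall problem of exactly the kind you set out to avoid. The inverse-limit/Rado-selection alternative has no inverse system to act on, since there is no natural restriction map between bijections $[B_0']^{n}\to[B_1']^{n}$ for different finite $B_0',B_1'$; compactness does not apply here.

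The paper bypasses all of this with a different organisation. It inducts on the \emph{size} $k$ of the subsets, building $F_k:[B_0]^k\to[B_1]^k$ for each $k$ separately and setting $F=\bigcup_k F_k$. The inductive step uses not a Hall condition but the strengthened partition exchange (Theorem~\ref{thm: GreeneMagnanti}, applied with singletons): it yields enumerations $B_0=\{e_\alpha:\alpha<\kappa\}$, $B_1=\{f_\alpha:\alpha<\kappa\}$ such that for every $\alpha$ the tails $\{e_\beta:\beta>\alpha\}$ and $\{f_\beta:\beta>\alpha\}$ are bases of a common minor $M_\alpha$. A $(k{+}1)$-set with least element $e_\alpha$ is then handled by pairing $e_\alpha$ with $f_\alpha$ and applying the induction hypothesis for $k$ \emph{inside $M_\alpha$}. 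This stratification by least element is a bijective decomposition on both sides, so the pieces reassemble into a global bijection with no deadlock possible; no matching-theoretic input, finite or infinite, is needed.
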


In the following section we introduce a few notation, then in Section \ref{sec: counterex} we present our counterexample in Theorem \ref{thm: 
Greene fails}. Finally,  the last section (Section \ref{sec: positiveBaseExch}) is devoted to the proofs of the positive results Theorems \ref{thm: 
GreeneMagnantiIntro} and 
\ref{thm: ExchangeAllFin}.

\section{Basic definitions and notation}
We use some standard set-theoretic notation, in particular: the variable $ \kappa $ stands for cardinal numbers, $ \alpha $ and $ \beta $ are ordinals, 
the set of natural numbers is denoted by $ \omega $, we write $ [X]^{<\kappa} $ for the set of subsets of $ X $ of size less than $ \kappa $, 
functions are represented as sets of ordered pairs. 

A \emph{matroid} is an ordered pair  $M=(E,\mathcal{I})$ with ${\mathcal{I} \subseteq 
\mathcal{P}(E)}$ such that  
\begin{enumerate}
[label=(\Roman*)]
\item $ \emptyset\in  \mathcal{I} $;
\item $ \mathcal{I} $ is downward closed;
\item[{(III')}]\label{item: matroid axiom3'} For every $ I,J\in \mathcal{I} $ where  $J $ is $ \subseteq $-maximal in $ \mathcal{I} $ and $ I 
$ is 
not, there exists an $  e\in J\setminus I $ such that
$ I+e\in \mathcal{I} $;
\item[{(IV')}]\label{item: matroid axiom4'} For every $ X\subseteq E $, any $ I\in \mathcal{I}\cap 
\mathcal{P}(X)  $ can be extended to a $ \subseteq $-maximal element of 
$ \mathcal{I}\cap \mathcal{P}(X) $.
\end{enumerate}

The sets in~$\mathcal{I}$ are called \emph{independent} while the sets in ${\mathcal{P}(E) \setminus \mathcal{I}}$ are 
\emph{dependent}. The maximal independent sets are called \emph{bases}. The \emph{rank} of a matroid is the size of its bases. The minimal 
dependent sets are the \emph{circuits}. Every dependent set contains a circuit (which is non-trivial for infinite matroids). The class of finitary 
matroids (see Definition \ref{def: finitary 
matroid}) consist exactly of those matroids in which every circuit is finite. For an  ${X \subseteq E}$, the pair ${\boldsymbol{M  
\upharpoonright X} :=(X,\mathcal{I} 
\cap \mathcal{P}(X))}$ is a matroid and it is called the \emph{restriction} of~$M$ to~$X$. We write ${\boldsymbol{M - X}}$ for 
$ M  \upharpoonright (E\setminus X) $  and call it the minor obtained by the 
\emph{deletion} of~$X$. 
The \emph{contraction} of $ X $ in $ M $ is a matroid on $ E\setminus X $ in which $ I\subseteq E\setminus X $ is independent iff $ J\cup I $ is 
independent in $ M $ for a (equivalently: for every) maximal independent subset $ J $ of $ X $.
Contraction and deletion commute, i.e. for 
disjoint 
$ X,Y\subseteq E $, we have $ (M/X)-Y=(M-Y)/X $.  Matroids of this form are the  \emph{minors} of~$M$. We say~${X 
\subseteq E}$ \emph{spans}~${e \in E}$ in matroid~$M$ if either~${e \in X}$ or $ \{ e \} $ is dependent in $ M/X $. 
If $ I $ is independent in $ M $  but $ I+e $ is dependent for some $ e\in E\setminus I $,  then there is a unique 
circuit   $ \boldsymbol{C_M(e,I)} $ of $ M $ through $ e $ contained in $ I+e $ which is called the \emph{fundamental circuit} of $ e $ on $ I $.

\section{The failure of Greene's base exchange theorem in infinite matroids}\label{sec: counterex}

Assume that $ M $ is a finitary matroid, $ B_0 $ and $ B_1 $ are disjoint bases and $ B_0=X_0\cup X_1 $ is a bipartition. It was shown by 
McDiarmid in \cite{mcdiarmid1975exchange} that there are disjoint independent sets $ I_0 $ and $ I_1 $ with $ I_0\cup I_1=B_0\cup B_1 $ and 
$ B_0 \cap I_i=X_i $ for $ i\in \{ 0,1 \} $. If $ M $ has a finite rank, then the $ I_i $ need to be bases because they have together $ 2\cdot r(M) $ 
elements. This argument fails if $ r(M) $ is infinite. Is it still true that they need to be bases? If not, is it always  possible to choose them to 
be bases? We demonstrate the negative answer for these questions:

\GreeneFail*
\begin{proof}
To simplify the notation let us identify the spanning trees with their edge sets.
Let $ G=(V,E) $ be the graph at Figure \ref{fig: GreeneFails}.

\begin{figure}[H]
\centering

\begin{tikzpicture}

\node[inner sep=0pt] (v1) at (-4,0) {$v_0$};
\node[inner sep=0pt] (v2) at (-2,0) {$v_1$};
\node[inner sep=0pt] (v3) at (-0,0) {$v_2$};
\node[inner sep=0pt] (v4) at (2,0) {$v_3$};
\node[inner sep=0pt] (v5) at (4,0) {$v_4$};
\node[inner sep=0pt] (v6) at (6,0) {$v_5$};
\node[inner sep=0pt] (v7) at (8,0) {$v_6$};
\node[inner sep=0pt] (v8) at (10,0) {$v_7$};
\node[inner sep=0pt] (v9) at (10.5,0) {$\dots$};

\draw  (v1) edge (v2);
\draw  (v2) edge[dashed] (v3);
\draw  (v3) edge (v4);
\draw  (v4) edge[dashed] (v5);
\draw  (v5) edge (v6);
\draw  (v6) edge[dashed] (v7);
\draw  (v7) edge (v8);

\draw  (v1) edge[dashed, out=-45,in=225,looseness=1]  (v4);
\draw  (v3) edge[dashed, out=-45,in=225,looseness=1]  (v6);
\draw  (v5) edge[dotted, out=-45,in=225,looseness=1]  (v8);
\draw  (v2) edge[out=45,in=135,looseness=1]  (v4);
\draw  (v4) edge[out=45,in=135,looseness=1]  (v6);
\draw  (v6) edge[dotted, out=45,in=135,looseness=1]  (v8);

\node at (-1.2,-1.8) {$h_0$};
\node at (3.2,-1.8) {$h_1$};
\node at (7.2,-1.8) {$h_2$};
\node at (-3,0.2) {$f_0$};
\node at (-1,0.2) {$f_1$};
\node at (1,0.2) {$f_2$};
\node at (3,0.2) {$f_3$};
\node at (5,0.2) {$f_4$};
\node at (7,0.2) {$f_5$};
\node at (9,0.2) {$f_6$};
\node at (0,1.2) {$e_0$};
\node at (4,1.2) {$e_1$};
\node at (8,1.2) {$e_2$};
\node at (5.2,-4.8) {};
\end{tikzpicture}
\vspace*{-2.5cm}
\caption{The graph $ G $. Right after the step $ n=1 $ of the induction we already know that  normal edges must belong to $ 
S_0 $ and dashed edges are in $ S_1 $.  The 
affiliation of the dotted edges are
unknown at this point.} \label{fig: GreeneFails}

\end{figure}
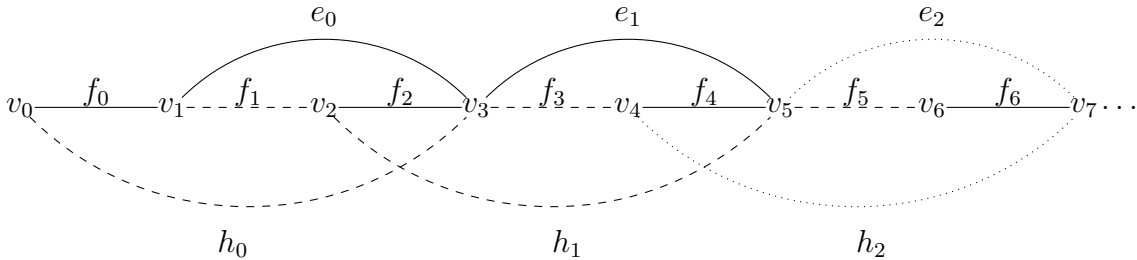

Let us denote the set of edges with exactly one end-vertex in $ U\subseteq V $ by $ \delta(U) $. Consider the spanning trees $ T_0:=\{ f_{n}:\ 
n<\omega \}$ and $ T_1:=E\setminus T_0 $. We  define 
$ X_0:=\{ f_{2n}:\ n<\omega \} $ 
and $ X_1:=\{ f_{2n+1}:\ n<\omega \} $. Suppose for a contradiction that $ S_0 $ and $ S_1 $ are edge-disjoint spanning trees with $ T_0\cap 
S_i=X_i $ for $ i\in \{ 0,1 \} $.  We must have $ h_0\in S_1 $  since 
otherwise $ v_0 $ were an 
isolated vertex in $ S_1 $.  But then all the edges in 
$ \delta(\{ v_0,\ v_1 \}) $ but $ e_0 $ are in $ S_1 $, thus we must have $ 
e_0\in S_0 $. Suppose that we already know for some $ n<\omega $ that $ \{ h_i:\ i\leq n \}\subseteq S_1 $ and $ \{ e_i:\ i\leq n \}\subseteq S_0 
$. Consider 
\[ V_n:=\left\lbrace  v_{2n+2-4k}:\ k\leq \frac{n+1}{2}\right\rbrace \cup \left\lbrace v_{2n+1-4k}:\ k\leq \frac{n}{2} \right\rbrace.  \]
(So $ V_0=\{ v_1, v_2 \} $,  $ V_1=\{ v_1,  v_2, v_5 \} $, $ V_2=\{  v_1,  v_2, v_5, v_6 \} $, $ V_3=\{v_1,  v_2, v_5, v_6, v_9  \} $ etc.) All the 
edges in $ \delta(V_n) $ but $ h_{n+1} $ are in $ S_0 $, thus 
necessarily $ h_{n+1}\in 
S_1 $. 
But then all the edges in 
$ \delta(\{ v_m:\ m\leq 2n+3\}) $ but $ e_{n+1} $ are in $ S_1 $ therefore $ e_{n+1}\in S_0 $. It follows by induction that 
\[ S_0=X_0\cup \{e_i:\  i<\omega \} \text{ and } S_1=X_1\cup \{h_i:\  i<\omega \}.  \]

But then $ S_1 $ consists of two vertex-disjoint rays contradicting the assumption that $ S_1 $ is a spanning tree.
\end{proof}

\section{Generalisations of the Aharoni-Pouzet base exchange theorem}\label{sec: positiveBaseExch}
The difficulty of base exchange problems in matroids of infinite rank is the lack of meaningful 
arithmetic operations with the rank function. Indeed, the standard proof of the 
Greene-Magnanti theorem involves subtractions of certain values of the rank function but these subtractions are no longer 
well-defined if the 
values in question are infinite. Furthermore, the contraction of a single edge in a base does not reduce the rank if it 
is infinite, thus the direct adaptation of proofs based on induction on the rank after such a contraction is also impossible. 

In contrast to the 
approach by Aharoni and Pouzet, our
proofs do not rely on infinite matching theory but use the elementary but powerful method by Kotlar, Roda and R. Ziv introduced in 
\cite{koltar2021seq}. This method 
applies symmetric 
subset base exchange as a subroutine. We are going to show that symmetric 
subset base exchange works in any matroid as long as the set we intend to exchange is finite.

\subsection{Partition base exchange with finite sets}

In contrast to  finitary matroids, it is unprovable for general matroids  that the bases must have the same size (it is independent of the axiomatic 
set 
theory 
ZFC \cite{higgs1969equicardinality, bowler2016self}) but the following weakening is easy to prove:
\begin{lem}[{\cite[Lemma 3.7]{bruhn2013axioms}}]\label{lem: eqbase weak}
If $ B_0 $ and $ B_1 $ are bases of a matroid with $ \left|B_0\setminus B_1\right| <\aleph_0$, then $ \left|B_0\setminus B_1\right| 
=\left|B_1\setminus B_0\right|$.
\end{lem}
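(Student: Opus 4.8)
The plan is to reduce to the case of two \emph{disjoint} bases and then to transform $B_0$ into $B_1$ by a finite sequence of single-element fundamental-circuit exchanges, counting how many are needed. For the reduction, set $X:=B_0\cap B_1$. Since $X\subseteq B_0$ and $B_0$ is a base, $X$ is independent and is its own (unique) maximal independent subset, so by the definition of contraction $B_0\setminus X=B_0\setminus B_1$ is a base of $M/X$; symmetrically $B_1\setminus B_0$ is a base of $M/X$. These two bases of $M/X$ are disjoint, and passing to $M/X$ leaves both sets $B_0\setminus B_1$ and $B_1\setminus B_0$ unchanged. Hence it suffices to prove: if $A_0,A_1$ are disjoint bases of a matroid and $n:=|A_0|<\aleph_0$, then $|A_1|=n$.

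Next I would build bases $A_0=D_0,D_1,\dots,D_n$ with $D_k\cap A_1=\{y_1,\dots,y_k\}$ and $|D_k\setminus A_1|=n-k$. Given $D_{k-1}$ with $k\le n$ (so $D_{k-1}\setminus A_1\neq\emptyset$), first note $A_1\setminus D_{k-1}\neq\emptyset$: otherwise $A_1\subseteq D_{k-1}$, and since $A_1$ is a maximal independent set while $D_{k-1}$ is independent this forces $A_1=D_{k-1}$, contradicting $D_{k-1}\setminus A_1\neq\emptyset$. Pick any $y_k\in A_1\setminus D_{k-1}$. As $D_{k-1}$ is a base, $D_{k-1}+y_k$ is dependent, so the fundamental circuit $C_M(y_k,D_{k-1})$ exists; it is not contained in the independent set $A_1$, hence it contains some $x_k\in D_{k-1}\setminus A_1$. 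Put $D_k:=D_{k-1}-x_k+y_k$. Then $D_k$ is independent, because any circuit contained in it would lie in $D_{k-1}+y_k$ and therefore equal $C_M(y_k,D_{k-1})$, which is impossible since $x_k\notin D_k$. Moreover $D_k$ spans $x_k$ via $C_M(y_k,D_{k-1})-x_k$, hence spans everything $D_{k-1}$ spans, i.e.\ all of $E$; an independent spanning set is a base, so $D_k$ is a base. The claims about $D_k\cap A_1$ and $|D_k\setminus A_1|$ are then immediate, and the $y_k$ are pairwise distinct because $y_k\notin D_{k-1}\supseteq\{y_1,\dots,y_{k-1}\}$.

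After $n$ steps $D_n\setminus A_1=\emptyset$, so $D_n\subseteq A_1$ with $D_n$ a base and $A_1$ independent, forcing $A_1=D_n=\{y_1,\dots,y_n\}$ and hence $|A_1|=n$, which gives the lemma. I expect the only genuinely infinite-flavoured point to be the legitimacy of a single-element exchange in a matroid of infinite rank, where one cannot appeal to equicardinality of bases; this is precisely why I verify that each $D_k$ is a base through the spanning argument rather than by comparing sizes, which neatly sidesteps the breakdown of rank arithmetic that is the real source of difficulty in the infinite setting.
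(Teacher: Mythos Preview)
Your proof is correct. The paper does not actually prove this lemma itself; it only cites \cite[Lemma 3.7]{bruhn2013axioms}, so there is no in-paper argument to compare against. Your reduction by contracting $B_0\cap B_1$ is standard and legitimate, and the core exchange argument---iteratively swapping an element of $D_{k-1}\setminus A_1$ on the fundamental circuit of some $y_k\in A_1\setminus D_{k-1}$---is exactly the right way to avoid any appeal to rank arithmetic or to equicardinality of bases (which, as the paper notes, is not provable in ZFC for general matroids). The two points where one must be careful in the infinite setting you handle explicitly: you show $A_1\setminus D_{k-1}\neq\emptyset$ by maximality of $A_1$ rather than by counting, and you verify that each $D_k$ is a base via the spanning argument rather than via $|D_k|=|D_{k-1}|$. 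Both are sound.
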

\begin{prop}\label{prop: InfFinBipartition}
Suppose that $ M=(E, \mathcal{I}) $ is a matroid, $ B_0 $ and $ B_1 $ are bases of $ M $ and $ X $ is a finite or cofinite subset of $ B_0 $. Then 
there is a $ 
Y\subseteq B_1 $ such that $ (B_0\setminus X) \cup Y $ and $ 
(B_1\setminus Y) \cup X $ are both bases.
\end{prop}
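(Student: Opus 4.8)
The plan is to reduce to the case where $X$ is \emph{finite} and then argue by induction on $\lvert X\rvert$, with single‑element symmetric exchange as the base case.

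\emph{Reductions.} First, if $X$ is cofinite in $B_0$ and the statement is known for finite sets, apply it to $X':=B_0\setminus X$ (finite): this gives $Y'\subseteq B_1$ with $X\cup Y'=(B_0\setminus X')\cup Y'$ and $(B_1\setminus Y')\cup X'$ both bases, and then $Y:=B_1\setminus Y'$ works, since $(B_0\setminus X)\cup Y=(B_1\setminus Y')\cup X'$ and $(B_1\setminus Y)\cup X=Y'\cup X$. So it suffices to treat finite $X$. Next, by first putting the finitely many elements of $X\cap B_1$ into $Y$ — for $e\in X\cap B_1$ we simply keep $e$ on both sides — and then passing from $M$ to $\bigl(M\upharpoonright(B_0\cup B_1)\bigr)/(B_0\cap B_1)$ (whose bases lift to bases of $M$, because the maximal independent subsets of $B_0\cup B_1$ are bases of $M$), we may assume that $B_0$ and $B_1$ are \emph{disjoint} bases with $B_0\cup B_1=E$ and that $\emptyset\neq X\subseteq B_0$; we seek $Y\subseteq B_1$.

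\emph{Base case $\lvert X\rvert=1$.} Write $X=\{x\}$. Since $B_0\cap B_1=\emptyset$, $B_1+x$ is dependent, so the fundamental circuit $C:=C_M(x,B_1)$ exists; it passes through $x$, $C-x\subseteq B_1$, and $x\in\operatorname{cl}_M(C-x)$. As $B_0$ is independent, $x\notin\operatorname{cl}_M(B_0-x)$, whence $C-x\not\subseteq\operatorname{cl}_M(B_0-x)$ (otherwise $x\in\operatorname{cl}_M(C-x)\subseteq\operatorname{cl}_M(B_0-x)$). Choose $y\in(C-x)\setminus\operatorname{cl}_M(B_0-x)$. Then $B_0-x+y$ is independent; extending it to a base $B'$ and applying Lemma \ref{lem: eqbase weak} to $B'$ and $B_0$ (whose symmetric difference is finite) forces $B'=B_0-x+y$, so it is a base. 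Similarly $B_1-y+x$ is independent — because $y\in C$ gives $x\notin\operatorname{cl}_M(B_1-y)$ — hence a base. Thus $Y=\{y\}$.

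\emph{Inductive step and main obstacle.} Let $\lvert X\rvert=n\ge 2$. The naive recursion — single‑exchange some $x\in X$ for a $y$, obtaining bases $B_0'=B_0-x+y$ and $B_1'=B_1-y+x$, apply the induction hypothesis to $X-x$ between $B_0',B_1'$ to get $Y'$, and output $Y:=Y'\cup\{y\}$ — works by a short computation \emph{provided} $x\notin Y'$; but $x$ now lies on the $B_1'$‑side, and a careless choice could make the recursion "return $x$", destroying both $Y\subseteq B_1$ and the wanted identities. I would instead take a partial exchange of \emph{maximum} size: a pair $(X_0,Y_0)$, $X_0\subseteq X$, $Y_0\subseteq B_1$, with $\lvert X_0\rvert$ maximal (this maximum exists as $X$ is finite) such that $B_0^{\ast}:=(B_0\setminus X_0)\cup Y_0$ and $B_1^{\ast}:=(B_1\setminus Y_0)\cup X_0$ are bases (again disjoint, covering $E$). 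For any $x\in X\setminus X_0$ we have $x\in B_0^{\ast}\setminus B_1^{\ast}$, so the base case yields $z\in B_1^{\ast}\setminus B_0^{\ast}$ with $B_0^{\ast}-x+z$ and $B_1^{\ast}-z+x$ bases; and if $z\notin X_0$ then $(X_0+x,\,Y_0+z)$ is a valid partial exchange, contradicting maximality. Hence every admissible $z$ lies in $X_0$, i.e. $\bigl(C_M(x,B_1^{\ast})-x\bigr)\setminus\operatorname{cl}_M(B_0^{\ast}-x)\subseteq X_0$ for every $x\in X\setminus X_0$. \textbf{The hard part} is to show that this configuration is impossible once $X_0\subsetneq X$, which then forces $X_0=X$ and proves the proposition. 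I expect this to require an augmenting‑type argument: from some $x_1\in X\setminus X_0$ one chases admissible single exchanges alternately between $X\setminus X_0$ and $X_0$ (invoking the base case at each step) until the partial exchange can be re‑routed into a strictly larger one. This is where the finiteness of $X$ is genuinely used — for instance, it guarantees (via Lemma \ref{lem: eqbase weak}) that $M/(B_0\setminus X)$ has \emph{finite} rank $\lvert X\rvert$, so that the relevant flats, circuits and cocircuits behave as in a finite matroid and the alternating search terminates; the necessity of finiteness is witnessed by Theorem \ref{thm: Greene fails}.
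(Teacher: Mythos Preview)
Your reductions and the base case $\lvert X\rvert=1$ are fine and match the paper's. The gap is exactly where you flag it: the inductive step is not completed. After taking a maximal partial symmetric exchange $(X_0,Y_0)$ and observing that every admissible single swap from $x\in X\setminus X_0$ must land in $X_0$, you only obtain new partial exchanges $(X_0-z+x,\,Y_0)$ of the \emph{same} size; you give no argument for why iterating such swaps eventually reaches an element of $B_1\setminus Y_0$ and enlarges the exchange. The remark that $M/(B_0\setminus X)$ has finite rank $\lvert X\rvert$ is correct but does not by itself control the symmetric condition on the $B_1$-side, so it does not close the gap.

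The paper's proof uses a different extremal set-up that supplies precisely the missing certificate. Instead of maximising a symmetric partial exchange, it chooses disjoint $Y,Z\subseteq B_1$ with $(B_0\setminus X)\cup Y$ and $X\cup Z$ independent and with the uncovered set $U:=B_1\setminus(Y\cup Z)$ of minimum size; Lemma~\ref{lem: eqbase weak} makes $U$ finite. The Edmonds--Fulkerson augmenting path lemma (Lemma~\ref{lem: augpath}), applied to the two matroids $M/X\upharpoonright B_1$ and $M/(B_0\setminus X)\upharpoonright B_1$, then either improves the covering (impossible by minimality) or produces a blocking set $S$ with $U\subseteq S\subseteq B_1$ spanned by both $(Y\cap S)\cup(B_0\setminus X)$ and $(Z\cap S)\cup X$. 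A short rank count in the finite-rank matroid $M\upharpoonright(X\cup S)$, again via Lemma~\ref{lem: eqbase weak}, yields a contradiction unless $U=\varnothing$. This blocking-set output of Edmonds--Fulkerson is exactly the structural ingredient your ``alternating search'' sketch lacks; if you want to rescue the inductive route, you will in effect have to reprove that lemma inside your argument.
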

\begin{proof}
By the symmetry between $ X $ and $ B_0\setminus X $ we may assume that $ X $ is finite. We can also assume without loss 
of generality that $ B_0\cap 
B_1=\emptyset $ since otherwise we consider the matroid $ M/(B_0\cap B_1) $, bases $ B_0\setminus B_1 $ and $ B_1\setminus B_0 $ and set $ 
X':= X\setminus B_1 $. If $ Y' $ is as desired for the new problem, then $ Y:=Y'\cup (B_0\cap B_1\cap X) $ is suitable for the original one. 

We take disjoint sets $ Y, Z\subseteq B_1 $ (see Figure \ref{fig: symmSubEx}) such that $ (B_0\setminus X)\cup Y $ and $ X\cup Z $ are both 
independent and for the 
set $ U:=B_1\setminus (Y\cup Z) $ of uncovered edges, $\left|U\right| $ is as small as possible. The set $ Z $ itself can cover all but at most $ 
\left|X\right| $ edges from $ B_1 $  and it misses exactly $ \left|X\right| $ many edges iff $ X\cup Z $ is a base 
(see Lemma \ref{lem: eqbase weak}). In particular,  $ U $ must be finite. Similarly, $ Y $ can cover 
at most $ \left|X\right| $ edges and $ (B_0\setminus X)\cup Y $ is a base iff $ \left|X\right|=\left|Y\right| $. Therefore it is enough to prove that $ 
U=\emptyset $.  Suppose for a contradiction that it is not the case. By relocating edges from $ Z $ to $ Y $, we can assume that $ 
\left|X\right|=\left|Y\right| $. 

\begin{figure}[h]
\centering

\begin{tikzpicture}[scale=0.7]

\draw  (-2.5,2.5) rectangle (2,-5);
\draw  (2,2.5) node (v1) {} rectangle (6.5,-5);
\draw  (-2.5,-5) rectangle (6.5,-3.5);
\draw  (v1) node (v2) {} rectangle (4.5,-5);
\draw  (v2) rectangle (4.5,1.5);

\node at (0,3.2) {$B_0$};
\node at (4.5,3.2) {$B_1$};
\node at (3.2,2.8) {$S$};
\node at (4.4,-5.4) {$Y$};
\node at (-0.2,-4.3) {$X$};
\node at (3.2,-4.2) {$Y\cap S$};
\node at (5.4,-4.3) {$Y\setminus S$};
\node at (3.2,2) {$U$};
\node at (-0.2,-0.8) {$B_0\setminus X$};
\node at (3.2,-0.8) {$Z\cap S$};
\node at (5.5,-0.9) {$Z\setminus S$};
\end{tikzpicture}

\caption{} \label{fig: symmSubEx}
\end{figure}
\begin{claim}
There is an $ S $ with $ U\subseteq S \subseteq B_1 $ such that $ S $ is spanned by both $ (Y\cap S)\cup (B_0 \setminus X) 
$ and $ (Z\cap S)\cup X $.
\end{claim}
\begin{proof}
We need the following classical   ``augmenting path lemma''  developed by Edmonds and Fulkerson in \cite{edmonds1968transversals}. A 
discussion of their method in the context of infinite matroids can be found for example in \cite[Subsection 3.1]{erde2019base}.
\begin{lem}[Edmonds and Fulkerson]\label{lem: augpath}
Let $ \{ M_i:\ i<\kappa \} $ be a family of matroids defined on the common edge set $ E $, let $ \{ I_i:\ i<\kappa \} $ be a family of pairwise 
disjoint sets such that~$I_i$ is independent in~$M_i$ and  $ U:=E\setminus \bigcup_{i<\kappa}I_i\neq \emptyset $. Then there is either another 
family $ \{ J_i:\ i<\kappa \} $ of pairwise disjoint sets where $ J_i $ is independent in $ M_i $ and an $ e\in U $ for which  
\[ \bigcup_{i<\kappa}J_i=\{ e \}\cup\bigcup_{i<\kappa}I_i \]   
or there is an $ S $ with $ U\subseteq S \subseteq E $ such that $ I_i\cap S $ spans $ S $ in $ M_i $ for every $ i<\kappa $.
\end{lem}

We apply Lemma \ref{lem: augpath} with the matroids $ M/X \upharpoonright B_1 $ and $ M/(B_0\setminus X) \upharpoonright B_1 $ and sets $ 
Z $ and $ Y $. By the choice of $ Y $ and $ Z $ it is impossible to cover more edges thus the second case of Lemma \ref{lem: augpath} occurs 
which provides the desired $ S $.
\end{proof}
By the properties of $ S $, the set $ X\cup  (Z\cap S) $ is a base of $ M \upharpoonright (X\cup S) $. Since $ S $ is independent in $ M $, there is 
some $ 
X'\subseteq X $, that $ X'\cup S $ is also a base of $ M \upharpoonright (X\cup S) $. Lemma \ref{lem: eqbase weak} applied to $ M 
\upharpoonright (X\cup S) $ with these two bases and $ 
\left|X\right|=\left|Y\right| $ ensure that 
 \[ \left|X'\right|=\left|X\right|-\left|Y\cap S\right|-\left|U\right|=\left|Y\setminus S \right|-\left|U\right|<\left|Y\setminus 
 S\right|. \]
 
Since $ (Y\cap S)\cup (B_0 \setminus X) $ spans $ S $ and $ X'\cup S $ spans $ X\cup S $, the set $ (Y\cap S)\cup (B_0 \setminus X)\cup X' $ 
spans $ X $ and therefore spans $ B_0 $ as well, thus contains a base. But it is ``too small'' to contain a base according to Lemma 
\ref{lem: eqbase weak} because  $ 
B_0\setminus X $ needs 
at least $ \left|X\right| $ new edges to become a base and
\[ \left|Y\cap S\right|+\left|X'\right|<\left|Y\cap S\right| +\left|Y\setminus S\right|=\left|Y\right|=\left|X\right|.\]
\end{proof}

We prove a slightly stronger statement than Theorem \ref{thm: GreeneMagnantiIntro} because we need later the extra generality to prove Theorem 
\ref{thm: ExchangeAllFin}.
\begin{thm}\label{thm: GreeneMagnanti}
Suppose that $ M=(E, \mathcal{I}) $ is a finitary matroid, $ B_0 $ and $ B_1 $ are bases  and 
$B_0=\bigcup_{i<\kappa}X_i  $ is a 
partition where all the $ X_i $ are finite. Then there is a 
partition $B_1=\bigcup_{i<\kappa}Y_i  $ and a bijection $ \sigma: \kappa \rightarrow \kappa $ such that $ (B_0\setminus X_i)\cup Y_i $ and
$ (B_0 \setminus \bigcup_{i\leq j<\kappa} X_{\sigma(j)})\cup \bigcup_{i\leq j<\kappa} Y_{j} $ are bases for every $ i<\kappa $. For $ 
\kappa \leq \omega $, the $ \sigma $ can be chosen to be the identity. 
\end{thm}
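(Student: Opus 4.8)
The plan is to run a transfinite recursion that processes the parts in a suitable order, using a \emph{relative} version of the symmetric exchange Proposition~\ref{prop: InfFinBipartition} as the engine and letting finitariness do the work at limit stages.

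The first ingredient is the following relative symmetric exchange: if $M$ is a matroid, $A_0,A_1$ are bases, $R\subseteq A_0\cap A_1$ is finite and $X\subseteq A_0$ is finite with $X\cap R=\emptyset$, then there is $Y\subseteq A_1\setminus R$ with $(A_0\setminus X)\cup Y$ and $(A_1\setminus Y)\cup X$ both bases. This is immediate from Proposition~\ref{prop: InfFinBipartition} applied inside $M/R$ to the bases $A_0\setminus R$, $A_1\setminus R$ and the finite set $X\subseteq A_0\setminus R$, then adding $R$ back (a base of $M/R$ together with the independent set $R$ is a base of $M$). Next I would prove the case $\kappa\le\omega$ with $\sigma=\mathrm{id}$. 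After contracting $B_0\cap B_1$ (the lift back being routine, with empty parts causing no trouble) we may assume $B_0\cap B_1=\emptyset$. Recurse on $i<\kappa$ maintaining a base $D_i:=(X_0\cup\dots\cup X_{i-1})\cup Q_i$, where $Q_i:=B_1\setminus(Y_0\cup\dots\cup Y_{i-1})$; note $D_0=B_1$ and that $D_i$ is exactly the $i$-th tail set $(B_0\setminus\bigcup_{j\ge i}X_j)\cup\bigcup_{j\ge i}Y_j$. At step $i$ apply the relative exchange lemma to $B_0$ and $D_i$ with $R:=X_0\cup\dots\cup X_{i-1}\ (\subseteq B_0\cap D_i)$ and $X:=X_i$; it yields $Y_i\subseteq Q_i$ with $(B_0\setminus X_i)\cup Y_i$ a base and $D_{i+1}:=(D_i\setminus Y_i)\cup X_i$ a base, continuing the recursion. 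The $Y_i$ are automatically pairwise disjoint subsets of $B_1$; that $\bigcup_i Y_i=B_1$ is where finitariness enters: if $b^*\in B_1$ were never used, then $b^*\in Q_i$ for all $i$, and the finite fundamental circuit $C_M(b^*,B_0)$ satisfies $C_M(b^*,B_0)\setminus\{b^*\}\subseteq X_0\cup\dots\cup X_{i-1}$ for some $i$, whence $C_M(b^*,B_0)\subseteq D_i$, contradicting that $D_i$ is independent.

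For arbitrary $\kappa$ I would reduce to the countable case by slicing $\kappa$ into countable ``closed'' chunks. Call $J\subseteq\kappa$ \emph{closed} if, with $R_J:=\bigcup_{i\in J}X_i$, the set $B_1\cap\mathrm{cl}(R_J)$ spans $\mathrm{cl}(R_J)$ — equivalently if $R_J\cup(B_1\setminus\mathrm{cl}(R_J))$ is a base. One builds a continuous increasing chain $\emptyset=J_0\subseteq J_1\subseteq\cdots$ of closed sets with $\bigcup_\alpha J_\alpha=\kappa$ and every increment $K_\alpha:=J_{\alpha+1}\setminus J_\alpha$ countable: at a successor stage one works inside $M/\mathrm{cl}(R_{J_\alpha})$, adjoins the least unused part, and closes off by alternately adding fundamental circuits with respect to $B_0\setminus\mathrm{cl}(R_{J_\alpha})$ and $B_1\setminus\mathrm{cl}(R_{J_\alpha})$ (fattening back to whole $X_i$'s each time), an $\omega$-step back-and-forth that only adds countably much; at limit stages one takes unions, which stay closed because closure commutes with increasing unions in a finitary matroid.

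On each chunk one then applies the $\kappa\le\omega$ case inside the minor $M_\alpha:=(M/\mathrm{cl}(R_{J_\alpha}))\upharpoonright(\mathrm{cl}(R_{J_{\alpha+1}})\setminus\mathrm{cl}(R_{J_\alpha}))$, with bases $B_0\cap(\mathrm{cl}(R_{J_{\alpha+1}})\setminus\mathrm{cl}(R_{J_\alpha}))$ and $B_1\cap(\mathrm{cl}(R_{J_{\alpha+1}})\setminus\mathrm{cl}(R_{J_\alpha}))$ and the partition $\{X_i:i\in K_\alpha\}$, obtaining the $Y_i$ for $i\in K_\alpha$. Taking $\sigma$ to enumerate $\kappa$ chunk by chunk (following within each chunk the identity order supplied by the countable case), every single-exchange statement and every within-chunk tail statement lifts from $M_\alpha$ to $M$ by the standard fact that a base of $M_\alpha$, together with the spanning set $R_{J_\alpha}$ of $\mathrm{cl}(R_{J_\alpha})$ and the base $B_0\setminus\mathrm{cl}(R_{J_{\alpha+1}})$ of $M/\mathrm{cl}(R_{J_{\alpha+1}})$, is a base of $M$; and the between-chunk tail sets are precisely $R_{J_\alpha}\cup(B_1\setminus\mathrm{cl}(R_{J_\alpha}))$, which are bases exactly because $J_\alpha$ is closed. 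The main obstacle is this last part: constructing the chain of closed sets with countable increments — i.e.\ checking that the closing-off stays countable and that being ``closed'' is preserved at limits — together with the bookkeeping that glues the chunkwise data into a single partition of $B_1$ and a single permutation of $\kappa$ satisfying all the tail conditions simultaneously.
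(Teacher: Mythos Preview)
Your proposal is correct and follows essentially the same strategy as the paper: for $\kappa\le\omega$ a recursion driven by Proposition~\ref{prop: InfFinBipartition} (your ``relative exchange lemma'' is precisely the paper's application of that proposition in the minor $M/\bigcup_{j<i}X_j$), with finitariness used exactly as you describe to show $\bigcup_i Y_i=B_1$; and for $\kappa>\omega$ a decomposition into countable ``closed'' chunks built by an $\omega$-step back-and-forth with fundamental circuits, which is the content of the paper's Lemma~\ref{lem: elementarySubmodel}, after which the countable case is applied chunkwise and glued via the concatenated permutation $\sigma$. The only cosmetic differences are your preliminary contraction of $B_0\cap B_1$ (harmless but unnecessary) and your phrasing via the closure operator rather than the paper's mutually-spanning chain $B_j^{\alpha}$.
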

\begin{proof}
For $ \kappa<\omega $, this is \cite[Theorem 1.1.]{koltar2021seq}. Suppose  first that $ \kappa = \omega $.
Assume that the $ Y_i $ are already defined for $ i<n $ for some $ n<\omega $ such that for every $ i<n $, the sets $ (B_0\setminus X_i)\cup Y_i 
$ 
and
$ (B_0 \setminus \bigcup_{i\leq j<\omega} X_{j})\cup (B_1\setminus \bigcup_{j<i} Y_{j}) $ are bases.  We apply Proposition 
\ref{prop: InfFinBipartition} with the matroid $ M/\bigcup_{i<n} X_{i} $, bases 
$ B_0\setminus\bigcup_{i<n} X_{i}  $ and $ B_1\setminus\bigcup_{i<n} Y_{i} $, and set $ X_n $ to obtain $ Y_n $. The 
recursion is done.  Since  $ Y_n\subseteq B_1 \setminus \bigcup_{i<n} Y_{i} $ for every $ n<\omega $, the sets $ Y_i $ are pairwise disjoint. 
It follows by induction via Proposition \ref{prop: InfFinBipartition}  that $ (B_0\setminus X_i)\cup Y_i$ and
$ (B_0 \setminus \bigcup_{i\leq j<\omega} X_{j})\cup (B_1\setminus \bigcup_{j<i} Y_{j}) $ are bases for every 
$ i<\omega $. In order to show that 
$ B_1=\bigcup_{i<\omega}Y_i $, let $ e\in B_1 $ be arbitrary. Since $ C(e, B_0) $ is finite, there is an $ i<\omega $ that 
$ C(e, B_0)-e\subseteq \bigcup_{j< i}X_j $. But then we must have $ e\in \bigcup_{j< i}Y_j  $ because   $ (B_0 \setminus \bigcup_{i\leq 
j<\omega} X_{j})\cup (B_1\setminus \bigcup_{j<i} Y_{j}) $ is a base. Therefore $ B_1\setminus \bigcup_{j<i} Y_{j}=
\bigcup_{i\leq 
j<\omega} Y_{j} $ which completes the proof of the case $ \kappa=\omega $.

Suppose that $ \kappa>\omega $. We reduce this to the countable case by the following technical lemma. Let us state the lemma in a slightly more 
general form than it is actually needed (it does not change the proof and having a reference for this more general form could be helpful).
\begin{lem}\label{lem: elementarySubmodel}
Assume that $ M=(E,\mathcal{I}) $ is a matroid without uncountable circuits, 
 $ B_0 $ and 
$ B_1 $ are bases of $ M $ with $ \aleph_0<\left|B_0\right|=:\kappa $ and $ 
B_0=\bigcup_{i<\kappa} X_i $ is a partition where each $ X_i $ is countable. Then there is a family $ \{ B_j^{\alpha}:\ j\in \{ 0,1 \},\ 
\alpha<\kappa 
\}  $  such that
\begin{enumerate}
\item\label{item: ures} $ B_0^{0}=B_1^{0}=\emptyset $;
\item\label{item: unio is base} $ \bigcup_{\alpha<\kappa}B_j^{\alpha}=B_j $ for $ j\in \{ 0,1 \} $;
\item\label{item: countable pieces} $B_j^{\alpha}\subseteq B_j^{\alpha+1} $ with $ \left|B_j^{\alpha+1}\setminus B_j^{\alpha}\right|=\aleph_0 
$ for every $ \alpha<\kappa $ 
and $ j\in \{ 0,1 \} $;
\item\label{item: continues} For a limit ordinal $ \alpha<\kappa $ and $ j\in \{ 0,1 \} $ we have $ 
B_j^{\alpha}=\bigcup_{\beta<\alpha}B_j^{\beta} $;
\item\label{item: span each other} $ B_0^{\alpha} $ and $ B_1^{\alpha} $ span each other in $ M $ for every $ \alpha $;
\item\label{item: not split X_i} Whenever $ B_0^{\alpha}\cap X_i\neq\emptyset $ for some $ \alpha, i<\kappa $, then $ X_i\subseteq 
B_0^{\alpha} $.
\end{enumerate}
\end{lem}
\begin{proof}
The proof is  a straightforward transfinite recursion (or alternatively a basic application of a chain of elementary submodels). Suppose that $ 
B_0^{\alpha}$ and $ B_1^{\alpha} $ are already defined. 
We set $ B_j^{\alpha, 0}:=B_j^{\alpha} $ for $ j\in \{ 0,1 \} $ and let  $ B_0^{\alpha, 1}:=B_0^{\alpha}\cup X_i $ for the smallest $ i $ with 
$ B_0^{\alpha}\cap X_i=\emptyset $. If $ B_0^{\alpha, n+1} $ and $ B_1^{\alpha, n} $ are defined for some $ n<\omega $, then 
 \begin{align*}
 B_1^{\alpha, n+1}&:=B_1^{\alpha, n}\cup \bigcup \{C(e, B_1)-e:\  e\in B_0^{\alpha, n+1}\setminus B_0^{\alpha, n} \}\\
 B_0 ^{\alpha, n+2}&:=  B_0 ^{\alpha, n+1} \cup \bigcup \{ X_i:\ (\exists e\in B_1^{\alpha, n+1}\setminus B_1^{\alpha, n}) (C(e, B_0)\cap 
 X_i\neq \emptyset) \}. 
 \end{align*}
 
 It is easy to check that $ B_j^{\alpha+1}:=\bigcup_{n<\omega}B_j^{\alpha, n} $ for $ j\in \{ 0,1 \} $ are suitable. Since limit steps obviously 
 preserve all the conditions, we are done.
\end{proof}
Let $ B_j^{\alpha} $ for $ j\in \{ 0,1 \} $ and $ \alpha<\kappa $ as in Lemma \ref{lem: elementarySubmodel}. Properties 
(\ref{item: countable pieces}) and (\ref{item: not split X_i}) guarantee that for every $ \alpha<\kappa $, the set $ B_{0}^{\alpha+1}\setminus 
B_{0}^{\alpha} $ is the union of countably infinite $ X_i $. Let $ \sigma_\alpha $ be an $ \omega $-type enumeration  of the sets $ X_i $ that are 
contained in 
$ B_{0}^{\alpha+1}\setminus B_{0}^{\alpha} $. We choose $ \sigma $ to be the concatenation of the sequences $ \sigma_\alpha $. Properties  
(\ref{item: countable pieces}) and (\ref{item: span each other}) guarantee that $B_{0,\alpha}:= 
B_0^{\alpha+1}\setminus 
B_0^{\alpha} $ and $B_{1,\alpha}:= B_1^{\alpha+1}\setminus B_1^{\alpha} $ are bases of  $ M_\alpha:=M \upharpoonright 
(B_0^{\alpha+1}\cup B_1^{\alpha+1} ) /B_{0}^{\alpha} $. For every $ 
\alpha<\kappa $ we apply the already proved countable case with
matroid $ M_\alpha $, bases $ B_0^{\alpha} $ and $ B_1^{\alpha} $  and partition 
$ B_{0,\alpha}=\bigcup_{n<\omega} X_{\sigma_\alpha(n)} $. Let 
$B_{1,\alpha}=\bigcup_{n<\omega} Y_{\alpha, n} $ be the resulting partition. We shall prove that letting $ Y_{\omega 
\alpha+n}:=Y_{\alpha, n} $  
results in a desired partition of $ B_1 $. The sets $ B_{1,\alpha} $ for $ \alpha<\kappa $ form a partition of $ B_1 $ by the properties
(\ref{item: ures})-(\ref{item: continues}).  
The sets $ Y_{\alpha, n} $ for $ n<\omega $ partition $ B_{1,\alpha} $ by construction. Thus the sets $ Y_i $ for $ i<\kappa $ partition $ 
B_1 $. Let $ \omega \alpha+n<\kappa $ be arbitrary. By construction 
\[  (B_{0,\alpha}\setminus X_{\sigma(\omega \alpha+n)})\cup Y_{\omega \alpha+n}   \text{ and } 
 [(B_{0,\alpha}\setminus \bigcup_{n\leq m<\omega} X_{\sigma(\omega \alpha+m)})]\cup \bigcup_{n\leq m<\omega} Y_{\omega \alpha+m} \]  
are bases of $ M_\alpha $. But then their respective union with $ 
B_0^{\alpha} $  results in bases of $ M \upharpoonright (B_0^{\alpha+1}\cup B_1^{\alpha+1}) $. By property (\ref{item: span each other}), such 
bases can be extended to bases of $ M $ 
by adding any of $ B_0\setminus B_0^{\alpha+1}  $ and 
$ B_1\setminus B_1^{\alpha+1}=\bigcup_{\omega(\alpha+1)\leq \beta<\kappa}Y_\beta  $. Thus the desired exchange properties hold.
\end{proof}

From the proof above it is clear that Proposition \ref{prop: InfFinBipartition} has the following extension:
\begin{cor}\label{cor: finManyPieces}
Suppose that $ M=(E, \mathcal{I}) $ is a matroid, $ B_0 $ and $ B_1 $ are bases, $ n<\omega $  and 
$B_0=\bigcup_{i\leq n}X_i  $ is a 
partition where all but at most one $ X_i $ are finite. Then there is a 
partition $B_1=\bigcup_{i\leq n}Y_i  $ such that $ (B_0\setminus X_i)\cup Y_i $ and $ (\bigcup_{j<i}X_j)\cup \bigcup_{i<j\leq n}Y_j $ is a base 
of $ M $ for each $ i\leq n $.
\end{cor}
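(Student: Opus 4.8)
The plan is to deduce Corollary \ref{cor: finManyPieces} from the proof scheme of Theorem \ref{thm: GreeneMagnanti} restricted to the finite index set $\{0,1,\dots,n\}$, with the single exceptional (possibly infinite) block handled by the extra generality that Proposition \ref{prop: InfFinBipartition} already provides for cofinite sets. First I would dispose of the case in which every $X_i$ is finite: then $B_0$ itself is finite, $M \upharpoonright (B_0 \cup B_1)$ has finite rank (by Lemma \ref{lem: eqbase weak} applied inside a suitable contraction, or simply because $|B_1\setminus B_0|=|B_0\setminus B_1|<\aleph_0$), and so the claim is exactly the finite Greene--Magnanti statement \cite[Theorem 1.1.]{koltar2021seq} after contracting $B_0\cap B_1$. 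So I may assume exactly one block, say $X_0$, is infinite (relabelling so that the infinite block comes first makes the nested condition $(\bigcup_{j<i}X_j)\cup\bigcup_{i<j\le n}Y_j$ cleanest, but the statement as written puts no constraint on which $X_i$ is infinite, so I will instead just track indices carefully).

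Next I would run the same greedy recursion as in the $\kappa=\omega$ case of Theorem \ref{thm: GreeneMagnanti}, but now only for the finitely many indices $i=0,1,\dots,n$. Concretely, suppose $Y_0,\dots,Y_{i-1}$ have been constructed so that for every $j<i$ the sets $(B_0\setminus X_j)\cup Y_j$ and $(\bigcup_{k<j}X_k)\cup(B_1\setminus\bigcup_{k<j}Y_k)$ are bases of $M$. Apply Proposition \ref{prop: InfFinBipartition} to the matroid $M/\bigcup_{k<i}X_k$ with bases $B_0\setminus\bigcup_{k<i}X_k$ and $B_1\setminus\bigcup_{k<i}Y_k$ and with the set $X_i$: this set is finite if $i\neq 0$ and cofinite in its base if $X_i$ is the infinite block (its complement in $B_0\setminus\bigcup_{k<i}X_k$ is $\bigcup_{k>i}X_k$, a finite union of finite sets, hence finite), so in either case Proposition \ref{prop: InfFinBipartition} applies and yields $Y_i\subseteq B_1\setminus\bigcup_{k<i}Y_k$ with $(B_0\setminus X_i)\cup Y_i$ and $(\bigcup_{k\le i}X_k)\cup(B_1\setminus\bigcup_{k\le i}Y_k)$ bases of the contraction, hence, adding back $\bigcup_{k<i}X_k$, bases of $M$. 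After $n+1$ steps the $Y_i$ are pairwise disjoint by construction.

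The only remaining point is to verify that $\bigcup_{i\le n}Y_i=B_1$, i.e. that $B_1\setminus\bigcup_{i\le n}Y_i=\emptyset$. This follows from the last base condition: $(\bigcup_{k\le n}X_k)\cup(B_1\setminus\bigcup_{k\le n}Y_k)=B_0\cup(B_1\setminus\bigcup_{k\le n}Y_k)$ is independent, but $B_0$ is already a base, so the only independent superset of $B_0$ is $B_0$ itself, forcing $B_1\setminus\bigcup_{k\le n}Y_k\subseteq B_0$; since the $Y_k\subseteq B_1$ and $B_0\cap B_1$ can be assumed empty (pass to $M/(B_0\cap B_1)$ exactly as in the proof of Proposition \ref{prop: InfFinBipartition}), this gives $B_1\setminus\bigcup_{k\le n}Y_k=\emptyset$. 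Finally, rewriting $B_1\setminus\bigcup_{k<i}Y_k=\bigcup_{i\le j\le n}Y_j$ and $B_0\setminus X_i=(\bigcup_{j<i}X_j)\cup(\bigcup_{i<j\le n}X_j)$, the two base conditions become exactly those in the statement after noting $(B_0\setminus X_i)\cup Y_i$ is already in the required form and $(\bigcup_{j<i}X_j)\cup\bigcup_{i<j\le n}Y_j\cup Y_i=(\bigcup_{j<i}X_j)\cup\bigcup_{i\le j\le n}Y_j$, which I would relabel to land on the displayed formula.

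The main obstacle is bookkeeping rather than mathematics: one must make sure that at the step dealing with the infinite block, the relevant set is genuinely cofinite in the current base (this is where I use that \emph{all but one} block is finite, so the complement of the infinite block is a finite union of finite sets) and that the index along which we later ``read off'' the nested condition matches the order in which blocks were processed. Since $n$ is finite, no $\sigma$ is needed and no elementary-submodel / transfinite argument is required; the whole statement is the finite-length version of the recursion already carried out, and the note ``From the proof above it is clear'' in the excerpt is accurate.
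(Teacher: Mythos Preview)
Your approach is exactly what the paper intends: the paper gives no separate proof beyond ``From the proof above it is clear'', and you correctly spell out that proof --- iterate Proposition~\ref{prop: InfFinBipartition} over the finitely many blocks, invoking its cofinite clause at the step handling the single infinite $X_i$. (The reduction to $B_0\cap B_1=\emptyset$ is better placed at the very start, since you already need it to know that $B_1\setminus\bigcup_{k<i}Y_k$ is disjoint from the contracted set $\bigcup_{k<i}X_k$ and hence a base of $M/\bigcup_{k<i}X_k$; but this is purely organizational.)

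One genuine point: your closing ``relabel to land on the displayed formula'' does not work and cannot work. What the recursion actually yields is that $(\bigcup_{j<i}X_j)\cup\bigcup_{i\le j\le n}Y_j$ is a base for each $i$, exactly as in Theorem~\ref{thm: GreeneMagnanti}; the printed formula $(\bigcup_{j<i}X_j)\cup\bigcup_{i<j\le n}Y_j$ differs from this by $Y_i$ and is false as stated (for $i=0$ it would force the proper subset $B_1\setminus Y_0$ of the base $B_1$ to be a base). This is a typographical slip in the corollary, not a defect in your argument: you have proved the intended statement, and no relabelling will reconcile it with the misprinted one.
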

\subsection{Exchanging all finite subsets of a base} In this subsection we prove a common generalisation of \cite{brylawski1973some} and 
\cite[Theorem 2.1]{aharoni1991bases}. We repeat it here for convenience:
\ExchangeAllFin*
\begin{proof}
We will make use of the following special case of Theorem \ref{thm: GreeneMagnanti} where the partition consists of singletons:
\begin{cor}\label{cor: WellOrder}
Assume that $ M=(E,\mathcal{I}) $ is a finitary matroid  and $ B_0 $ and $ B_1 $ are bases. Then there are 
enumerations $ B_0=\{ e_\alpha:\ 
\alpha<\kappa \} $ and $ B_1=\{ f_\alpha:\ \alpha<\kappa \} $ such that $ B_0-e_\alpha + f_\alpha $ 
and $ (B_0 \setminus \{ e_\beta:\ \alpha\leq \beta<\kappa\})\cup \{ f_\beta:\ \alpha\leq \beta<\kappa \} $ are bases for every $ \alpha<\kappa $.
\end{cor}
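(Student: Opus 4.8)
The plan is to deduce Corollary \ref{cor: WellOrder} as the special case of Theorem \ref{thm: GreeneMagnanti} in which every block $X_i$ of the partition of $B_0$ is a singleton. Concretely, write $B_0=\{e_\alpha:\alpha<\kappa\}$ for an arbitrary enumeration of the base (recall $\kappa=|B_0|$, which is well-defined here since $M$ is finitary and hence all bases are equicardinal), and apply Theorem \ref{thm: GreeneMagnanti} to the partition $B_0=\bigcup_{\alpha<\kappa}X_\alpha$ with $X_\alpha:=\{e_\alpha\}$. Each $X_\alpha$ is finite (indeed a singleton), so the hypotheses are met.

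Theorem \ref{thm: GreeneMagnanti} then yields a partition $B_1=\bigcup_{\alpha<\kappa}Y_\alpha$ together with a bijection $\sigma:\kappa\to\kappa$ such that $(B_0\setminus X_\alpha)\cup Y_\alpha$ and $(B_0\setminus\bigcup_{\alpha\le\beta<\kappa}X_{\sigma(\beta)})\cup\bigcup_{\alpha\le\beta<\kappa}Y_\beta$ are bases for every $\alpha<\kappa$. Since the $Y_\alpha$ partition the base $B_1$ and $(B_0-e_\alpha)\cup Y_\alpha$ is a base, a cardinality count via Lemma \ref{lem: eqbase weak} (applied to $B_0$ and $(B_0-e_\alpha)\cup Y_\alpha$, whose symmetric difference is finite) forces $|Y_\alpha|=1$; write $Y_\alpha=\{f_\alpha\}$. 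This already gives the first desired property $B_0-e_\alpha+f_\alpha$ is a base. To obtain the second property with the enumerations matched up in the same index, I would reindex: set $e'_\alpha:=e_{\sigma(\alpha)}$ and $f'_\alpha:=f_{\sigma(\alpha)}$. Then $\{e'_\beta:\alpha\le\beta<\kappa\}=\{e_{\sigma(\beta)}:\alpha\le\beta<\kappa\}$ and likewise for the $f$'s, so the second clause of Theorem \ref{thm: GreeneMagnanti} says exactly that $(B_0\setminus\{e'_\beta:\alpha\le\beta<\kappa\})\cup\{f'_\beta:\alpha\le\beta<\kappa\}$ is a base, while $B_0-e'_\alpha+f'_\alpha=B_0-e_{\sigma(\alpha)}+f_{\sigma(\alpha)}$ is a base by the first clause applied at index $\sigma(\alpha)$. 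Relabelling $e'_\alpha,f'_\alpha$ back to $e_\alpha,f_\alpha$ gives the statement.

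There is essentially no obstacle here: the only things to be slightly careful about are (i) that the $Y_\alpha$ really are singletons, which is where Lemma \ref{lem: eqbase weak} is invoked, and (ii) the bookkeeping in composing with $\sigma$ so that both base conditions hold simultaneously at the common index $\alpha$. Note that for $\kappa\le\omega$ one can take $\sigma$ to be the identity, so the reindexing step is only needed in the genuinely uncountable case.
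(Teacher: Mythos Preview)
Your overall plan matches the paper exactly: the corollary is declared there to be the singleton special case of Theorem \ref{thm: GreeneMagnanti}, with no further argument. The observation that each $Y_\alpha$ must be a singleton via Lemma \ref{lem: eqbase weak} is a nice detail the paper leaves implicit.

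However, the reindexing step contains a genuine bookkeeping error. In the second clause of Theorem \ref{thm: GreeneMagnanti} the bijection $\sigma$ is applied \emph{only to the $X$'s}, not to the $Y$'s: what is asserted to be a base is
\[
\Bigl(B_0\setminus\bigcup_{i\le j<\kappa}X_{\sigma(j)}\Bigr)\cup\bigcup_{i\le j<\kappa}Y_j,
\]
i.e.\ $(B_0\setminus\{e_{\sigma(j)}:j\ge i\})\cup\{f_j:j\ge i\}$. With your choice $f'_\alpha:=f_{\sigma(\alpha)}$ you obtain $\{f'_\beta:\beta\ge\alpha\}=\{f_{\sigma(\beta)}:\beta\ge\alpha\}$, which is \emph{not} the set $\{f_j:j\ge\alpha\}$ appearing in the theorem unless $\sigma$ happens to map every tail $[\alpha,\kappa)$ onto itself --- a property a generic bijection of $\kappa$ certainly lacks. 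So the sentence ``the second clause of Theorem \ref{thm: GreeneMagnanti} says exactly that $(B_0\setminus\{e'_\beta:\alpha\le\beta<\kappa\})\cup\{f'_\beta:\alpha\le\beta<\kappa\}$ is a base'' is not justified.

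The correct reindexing is to permute only one side: set $e'_\alpha:=e_{\sigma(\alpha)}$ and keep $f'_\alpha:=f_\alpha$. Then the second base property is literally the second clause of the theorem at index $\alpha$. For the first property one then needs $B_0-e_{\sigma(\alpha)}+f_\alpha$ to be a base; if you trace through the proof of Theorem \ref{thm: GreeneMagnanti} (rather than just its statement) you will see that this is precisely what is established there, since in the uncountable case the $Y$'s are produced piecewise against the $\sigma$-reordered $X$'s. In other words, the pairing that comes out of the construction is $X_{\sigma(i)}\leftrightarrow Y_i$, which is what you need.
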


It is enough to show that for every $ k<\omega $ there is a bijection $ F_k: [B_0]^{k}\rightarrow [B_1]^{k} $ for which $ (B_0\setminus 
I)\cup F_k(I) $ is a base for every $ I\in [B_0]^{k} $ because then $ F:=\bigcup_{k<\omega}F_k $ is suitable. We define $ F_0:=\emptyset $. 
Suppose that 
we already know for some $ k $ and every $ M, B_0 $ and $ B_1 $ that such a bijection $ F_k =F_{ k,M, B_0, B_1 }$ exists. Let $ M, B_0 $ 
and $ B_1 $ be fixed. We also fix enumerations as in Corollary \ref{cor: WellOrder} and let us well-order $ B_0 $ and $ B_1 $  according to these 
enumerations. In order to 
define a desired $ F_{k+1} $, it is enough to give for every $ \alpha<\kappa $ a bijection $ F_{k+1, \alpha} 
$ between the $ k $-subsets of $ B_0 $ with smallest edge $ e_\alpha $ and the $ k $-subsets of $ B_1$ with  smallest edge $ f_\alpha $. Indeed, if 
it is 
done, then $ F_{k+1}:=\bigcup_{\alpha<\kappa}F_{k+1, \alpha} $ is appropriate. Corollary \ref{cor: WellOrder} guarantees that
$ B_{0,\alpha}:=\{ e_\beta:\ \alpha < \beta<\kappa \} $ and $B_{1,\alpha}:= \{ f_\beta:\ \alpha < \beta<\kappa \} $ are bases in 
$M_\alpha:= M/(\{ e_\beta:\ \beta<\alpha \}\cup \{ f_\alpha \}) $. Let $ F_{k+1, \alpha}' $ be 
what we get by applying the induction hypothesis for $ k $ with $ M_\alpha,  B_{0,\alpha} $ and $ B_{1,\alpha} $. Then Corollary 
\ref{cor: WellOrder} and the induction hypothesis ensure that defining  \[ F_{k+1, \alpha}(I):=F_{k+1, 
\alpha}' (I-e_\alpha)+f_\alpha \] is 
suitable.
\end{proof}

\begin{quest}
Is it true  for every finitary matroid $M $ and bases  $ B_0 $ and $ B_1 $  that there exists a bijection $ F: 
\mathcal{P}(B_0)\rightarrow  \mathcal{P}(B_1)$ such that  $ (B_0\setminus I)\cup F(I) $ is a 
base for every $I\subseteq B_0 $?
\end{quest}
\printbibliography
\end{document}